\theoremstyle{plain}
\newtheorem{thm}{Theorem}[section]
\newtheorem{cor}[thm]{Corollary}
\newtheorem{lem}[thm]{Lemma}
\newtheorem{note}{Note}[section]
\newtheorem{exam}{Example}[section]
\theoremstyle{definition}
\newtheorem{defn}{Definition}[section]
\begin{document}

\setcounter {page}{1}
%---------------Title,Author,Abstract-----------------------------------------------
\title{ Generalizations of bornological convergence and convergence of partial maps via ideal}

\author[ P. Malik and A. Ghosh ]{Prasanta Malik* and Argha Ghosh* \ }
\newcommand{\acr}{\newline\indent}
\maketitle \noindent {\address{{*\,} Department of Mathematics,
The University of Burdwan, Burdwan-713104, West Bengal, India.
Email: pmjupm@yahoo.co.in, buagbu@yahoo.co.in }} \maketitle
\begin{abstract}
{ In this paper using the notion of an ideal $\mathcal I$ on a
directed set, we extend the notion of convergence of nets of
partial maps to the notions of $\mathcal I$-convergence ( or
filter convergence) of nets of partial maps and $\mathcal
I^*$-convergence of nets of partial maps.}
\end{abstract}
\author{}
\maketitle

 \noindent\textbf{Key words:} Partial maps, $\mathcal I$-convergence of partial maps, $\mathcal I^*$-convergence of partial maps  \\

\noindent \textbf {AMS subject classification (2010) :} 54E40.  \\

%-------------------------Section 1- Background and introduction-----------------------
\section{\textbf{Introduction}}

A partial map or a partial function from a metric space $(X,d)$ to
a metric space $(Y, \mu)$ is a pair $(F,\gamma)$ where $F$ is a
closed non-empty subset of $X$ and $\gamma$ is a function from $F$
to $Y$. The space of all partial maps from $X$ to $Y$ is denoted
by $\mathcal P[X, Y]$ and the space of all continuous partial maps
from $X$ to $Y$ is denoted by $\mathcal C[X, Y]$.

The notion of bornological convergence of nets was introduced by
Lechicki et al. \cite{Le} and the notion of convergence of partial
maps was introduced by Beer et al. \cite{Be}. To extended the
ideas of bornological convergence and convergence of partial maps,
in this paper using the notion of an ideal $\mathcal I$ on a
directed set, we first, generalize the notion of bornological
convergence of nets to bornological $\mathcal I$-convergence of
nets and then we generalize the concept of convergence of nets of
partial maps to the notion of $\mathcal I$-convergence of nets of
partial maps. Further we introduce the concept of bornological
$\mathcal I^*$-convergence of nets as well as the concept of
$\mathcal I^*$-convergence of nets of partial maps and study their
relationship with bornological $\mathcal I$-convergence of nets
and $\mathcal I$-convergence of nets of partial maps respectively.

%------------------------------Section-2 - Related definitions-------------------------
\section{\textbf{Basic definitions and notation  }}
%------------------------------------introduction to section2-----------------------
\begin{defn}\cite{Ko}
 If $X$ is a non-empty set, then a family $\mathcal I\subset 2^X$ is
called an ideal of $X$ if\\
(i) $\emptyset\in \mathcal I$,\\
(ii) $A, B \in\mathcal I$ implies $A \cup B \in\mathcal I$, and\\
(iii) $A \in \mathcal I$ ; $B\subset A $ implies $B \in\mathcal I$.\\\\
The ideal $\mathcal I$ is called non-trivial if $\mathcal
I\neq\left\{\emptyset\right\}$ and $X\notin\mathcal I$.
\end{defn}
\begin{defn}\cite{Ko}
A non empty family $\mathcal F$ of subsets of a non-empty set $X$ is called a filter of $X$ if\\
(i) $\emptyset\notin \mathcal F$,\\
(ii) $A, B\in \mathcal F$ implies $A \cap B \in\mathcal F$ and\\
(iii) $A \in\mathcal F$ ; $A \subset B$ implies $B \in\mathcal F$.\\\\
Clearly, if $\mathcal I \subset 2^X$ is a non-trivial ideal of
$X$, then
\begin{center}
 $\mathcal {F(I)} =\left\{A\subset X~:~ X\setminus A\in \mathcal I\right\}$
\end{center}
is a filter on $X$, called the filter associated with $\mathcal
I$.
\end{defn}

\noindent A non-trivial ideal of $X(\neq\emptyset)$ is called
admissible if $\{y\}\in\mathcal I$ for each $y\in X$.

Throughout the paper, $X = (X, d)$, $Y = (Y,\mu)$ will denote
metric spaces, $ \mathcal P[X, Y]$ will denote the space of all
partial maps from the metric space $X$ to the metric space $Y$,
$\mathcal G$ will denote a directed set $ (\mathcal G, \geq)$ and
$\mathcal I$ will denote a non-trivial ideal of $\mathcal G$
unless otherwise mentioned.

An ideal $\mathcal I$ of $\mathcal G$ will be written sometimes as
$\mathcal I_{\mathcal G}$ to indicate the directed set $\mathcal
G$ of which $\mathcal I$ is an ideal.

 Let $\mathcal M_\gamma = \left\{\alpha\in\mathcal G:\alpha\geq \gamma\right\}$, $\gamma \in\mathcal G$. Then the collection
\begin{center}
$\mathcal F_0 =\left\{A\subset\mathcal G:A\supset \mathcal
M_\gamma~for~some~\gamma \in \mathcal G \right\}$
\end{center}
forms a filter in $\mathcal G$. Let $\mathcal I_0
=\left\{A\subset\mathcal G:\mathcal G\setminus A\in\mathcal
F_0\right\}$. Then $\mathcal I_0$ is a non-trivial ideal in
$\mathcal G$.

\begin{defn}\cite{La}
A non-trivial ideal $\mathcal I$ of $\mathcal G$ is called
$\mathcal G$-$admissible $ if
$ \mathcal M_n \in{\mathcal F(\mathcal I)}$ for all $n \in \mathcal G$. \\
\end{defn}

\begin{defn}\cite{La2}\label{Se}
A net $\{x_\gamma\}_{\gamma\in\mathcal G}$ in a metric space $(X,
d)$ is said to be $\mathcal I$-convergent to $x\in X$ if for every
$\epsilon> 0 $, $\{\gamma\in \mathcal G: d(x_\gamma,x)\geq
\epsilon\}\in\mathcal I$. In this case, we write $\mathcal
I-lim~x_\gamma = x $.
\end{defn}
\begin{note}
The above definition of $\mathcal I$-convergence can be written as
follows: a net $\left\{x_\gamma\right\}_{\gamma \in\mathcal G}$ in
a metric space $(X,d)$ is said to be $\mathcal I$-convergent to
$x\in X$ if for every $\epsilon>0$,
\begin{center}
$\left\{\gamma\in\mathcal{G}:d(x_\gamma,x)<\epsilon\right\}\in
{\mathcal {F(I)}}.$
\end{center}
\end{note}

We now discuss the concept of bornology and the notion of
bornological convergence on a metric
space (for more details see \cite{Be, Le}).  \\

\noindent If $a \in X$ and $\epsilon > 0$, then $B(a, \epsilon)$
denotes the open $\epsilon$-ball with center at $a$ and radius
$\epsilon$. If $G$ is a nonempty subset of $X$, we write $d(a,G)$
to denote the distance from $a$ to $G$ and $G^\epsilon$ to denote
the $\epsilon$-enlargement of the set $G$, i.e.,
\begin{center}
$G^\epsilon=\left\{x:d(x,G)<\epsilon\right\}=\underset{x\in
G}{\bigcup}B(x,\epsilon)$.
\end{center}

\begin{defn}
If $X$ is a non-empty set, then a family $\mathcal B$ of non-empty
subsets of $X$ is
called a bornology on $X$ if the following conditions are satisfied:\\
(i) $\mathcal B$ covers $X$,\\
(ii) $A, B \in \mathcal B$ implies $A \cup B \in \mathcal B$ and\\
(iii) $A \in \mathcal B$ ; $B\subset A $ implies $B \in \mathcal
B$.

\end{defn}

\noindent The family $\mathcal F$ of all non-empty finite subsets
of $X$ is the smallest bornology on $X$ and the family $\mathcal
P_0(X)$ of all non-empty subsets of $X$ is the largest bornology
on $X$. Other important bornologies are: the family $\mathcal B_d$
of the nonempty $d$-bounded subsets, the family $\mathcal B_{tb}$
of the nonempty $d$-totally bounded subsets and the family
$\mathcal K$ of nonempty subsets
of $X$ whose closures are compact.\\

\begin{defn}\cite{Le}
Let $\mathcal B$ be a bornology on a metric space $(X,d)$. A
subfamily $\mathcal B'$ of $\mathcal B$ which is cofinal in
$\mathcal B$ with respect to inclusion is called a \texttt{base}
for the bornology $\mathcal B$. A bornology is called
\texttt{local} if it contains a small ball around each point of
$X$. Also, a bornology is said to be \texttt{stable under small
enlargements} if for every $B\in\mathcal B$ there is
$\varepsilon>0$ such that $B^\varepsilon\in \mathcal B$.
\end{defn}
\begin{defn}\cite{Le}
Let $(X, d)$ be a metric space and $\mathcal B$ be a bornology on
$(X,d)$. A net $\{D_\gamma\}_{\gamma\in \mathcal G}$ in $\mathcal
P_0(X)$ is called $\mathcal B^-$-convergent (lower bornological
convergent) to $D\in \mathcal P_0(X)$ if for every $B\in\mathcal
B$ and $\epsilon>0$, the following condition occurs eventually:
\begin{center}
 $D\cap B\subset D_\gamma^\epsilon.$
\end{center}
In this case, we write $D\in\mathcal B^--lim ~D_\gamma$.

Similarly the net is called $\mathcal B^+$-convergent (upper
bornological convergent) to $D\in\mathcal P_0(x)$ if for every
$B\in\mathcal B$ and $\epsilon>0$, the following condition occurs
eventually:
\begin{center}
 $D_\gamma\cap B\subset D^\epsilon$.
\end{center}
  In this case, we write $D\in\mathcal B^+-lim ~D_\gamma$.
\end{defn}

\noindent When both the upper and lower bornological convergences
occur, we say two-sided bornological convergence occurs and we
write $D\in\mathcal B-lim ~D_\gamma$.

\begin{defn}\cite{Be}
Let $(X,d)$, $(Y,\mu)$ be metric spaces, $\mathcal B$ be a
bornology on $X$. A net
$\{(D_\gamma,u_\gamma)\}_{\gamma\in\mathcal G}$ in $\mathcal
P[X,Y]$ is said to be $\mathcal P^-(\mathcal B)$-convergent to
$(D,u)\in\mathcal P[X,Y]$, if for every $B\in \mathcal B$ and
$\epsilon>0$, the following inclusion holds eventually:
\begin{center}
$u(D\cap B_1)\subset[u_\gamma(D_\gamma\cap
B_1^\epsilon)]^\epsilon$, for all $B_1 \subset B$.
\end{center}
In this case we write $(D,u)\in\mathcal P^-(\mathcal B)-lim(D_\gamma,u_\gamma)$.\\

\noindent Similarly, the net
$\{(D_\gamma,u_\gamma)\}_{\gamma\in\mathcal G}$ in $\mathcal
P[X,Y]$ is said to be $\mathcal P^+(\mathcal B)$-convergent to
$(D,u)\in\mathcal P[X,Y]$, if for every $B\in \mathcal B$ and
$\epsilon>0$, the following inclusion holds eventually:
\begin{center}
$u_\gamma(D_\gamma\cap B_1)\subset[u(D\cap
B_1^\epsilon)]^\epsilon$, for all $B_1 \subset B$.
\end{center}
In this case we write $(D,u)\in\mathcal P^+(\mathcal
B)-lim(D_\gamma,u_\gamma)$.
\end{defn}

\noindent If the net $\{(D_\gamma,u_\gamma)\}_{\gamma\in\mathcal
G}$ is both $\mathcal P^-(\mathcal B)$-convergent and $\mathcal
P^+(\mathcal B)$-convergent to $(D,u)$, then we say that
$\{(D_\gamma,u_\gamma)\}_{\gamma\in\mathcal G}$ is $\mathcal
P(\mathcal B)$-convergent to $(D,u)$ and in this case, we write
$(D,u)\in\mathcal P(\mathcal B)-lim(D_\gamma,u_\gamma)$.

\begin{defn}\cite{Be}
Let $\mathcal B$ be a bornology on $(X,d)$ and $(E,v)\in\mathcal
P[X,Y]$. We say that $(E,v)$ is uniformly continuous relative to
the bornology $\mathcal B$ if for every $B\in\mathcal B$ with
$E\cap B\neq \phi$, the map
\begin{center}
$v:E\cap B\rightarrow Y$
\end{center}
 is uniformly continuous.

We say that $(E,v)$ is  strongly uniformly continuous relative to
the bornology $\mathcal B$ if for every $B\in\mathcal B$ and for
every $\epsilon>0$ there is $\eta>0$ such that
$\mu(v(x),v(w))<\epsilon$ whenever $d(x,w)<\eta$ and $x,w\in E\cap
B^\eta$.
\end{defn}

\section{\textbf{bornological $\mathcal I$-convergence and $\mathcal I$-convergence of nets of partial maps }}

%...................................Definition 3.1...........................................
\begin{defn}\label{Bi}
Let $\mathcal B$ be a bornology on $X$. A net
$\{D_\gamma\}_{\gamma\in \mathcal G}$ in $\mathcal P_0(X)$ is
called $\mathcal B_{\mathcal I}^-$-convergent (lower bornological
$\mathcal I$-convergent) to $D\in \mathcal P_0(X)$ if for every
$B\in\mathcal B$ and $\epsilon>0$,
\begin{center}
 $\{\gamma\in\mathcal G:~D\cap B\subset D_\gamma^\epsilon\}\in{\mathcal F(\mathcal I)}.$
\end{center}
In this case, we write $D\in\mathcal B_{\mathcal I}^--lim ~D_\gamma$.\\
\noindent Similarly, the net $\{D_\gamma\}_{\gamma\in \mathcal G}$
in $\mathcal P_0(X)$ is called $\mathcal B_{\mathcal
I}^+$-convergent (upper bornological $\mathcal I$-convergent) to
$D\in\mathcal P_0(X)$ if for every $B\in\mathcal B$ and
$\epsilon>0$,
\begin{center}
 $\{\gamma\in\mathcal G:~D_\gamma\cap B\subset D^\epsilon\}\in{\mathcal F(\mathcal I)}.$
\end{center}
In this case, we write $D\in\mathcal B_{\mathcal I}^+-lim
~D_\gamma$.
\end{defn}
\noindent If $D\in\mathcal B_{\mathcal I}^--lim ~D_\gamma$ as well
as $D\in\mathcal B_{\mathcal I}^+-lim ~D_\gamma$, then we say that
the net $\{D_\gamma\}_{\gamma\in \mathcal G}$ is bornological
$\mathcal I$-convergent to $D$ and in this case, we write
$D\in\mathcal B_{\mathcal I}-lim ~D_\gamma$.

%......................................Definition 3.3.................................................
\begin{defn}
Let $\mathcal B$ be a bornology on $X$. A net
$\{(D_\gamma,u_\gamma)\}_{\gamma\in\mathcal G}$ in $\mathcal
P[X,Y]$ is said to be $\mathcal {P_I}^-(\mathcal B)$-convergent to
$(D,u)\in\mathcal P[X,Y]$ if for every $B\in \mathcal B$ and
$\epsilon>0$,
\begin{center}
$\{\gamma\in\mathcal G:\forall B_1(\subset B),~u(D\cap
B_1)\subset[u_\gamma(D_\gamma\cap
B_1^\epsilon)]^\epsilon\}\in{\mathcal F(\mathcal I)}.$
\end{center}
 In this case, we write $(D,u)\in\mathcal {P_I}^-(\mathcal B)-lim(D_\gamma,u_\gamma)$.

\noindent Similarly, the net
$\{(D_\gamma,u_\gamma)\}_{\gamma\in\mathcal G}$ in $\mathcal
P[X,Y]$  is said to be $\mathcal {P_I}^+(\mathcal B)$-convergent
to $(D,u)\in \mathcal P[X,Y]$ if for every $B\in \mathcal B$ and
$\epsilon>0$,
\begin{center}
$\{\gamma\in\mathcal G:\forall B_1(\subset
B),~u_\gamma(D_\gamma\cap B_1)\subset[u(D\cap
B_1^\epsilon)]^\epsilon\}\in{\mathcal {F(I)}}$.
\end{center}
In this case, we write $(D,u)\in\mathcal {P_I}^+(\mathcal
B)-lim(D_\gamma,u_\gamma)$.
\end{defn}
%----------------------Definition3.1-------------------------------------------
\noindent If $(D,u)\in\mathcal {P_I}^-(\mathcal
B)-lim(D_\gamma,u_\gamma)$ as well as $(D,u)\in\mathcal
{P_I}^+(\mathcal B)-lim(D_\gamma,u_\gamma)$, then we say that the
net $\{(D_\gamma,u_\gamma)\}_{\gamma\in\mathcal G}$ is $\mathcal
{P_I}(\mathcal B)$-convergent to $(D,u)$ and we denote it by
$(D,u)\in\mathcal {P_I}(\mathcal B)-lim(D_\gamma,u_\gamma)$.

%..........................................rem 3.1......................................................
\noindent We now cite an example of a net of partial maps which is
$\mathcal {P_I}^+(\mathcal B)$-convergent but not $\mathcal
P^+(\mathcal B)$-convergent.
%...........................example3.1...............................................
\begin{exam}
Let $X=[-1,0]$ and $Y=\mathbb R$ be two metric spaces with usual
metric. Let $\mathcal I = \mathcal I_d=\{A\in\mathbb N: d(A)=0\}$,
where $d(A)$ is the asymptotic density of the set $A$, defined by
$d(A)=\underset{n\rightarrow
\infty}{\lim}\frac{\left|A(n)\right|}{n}$, where $A(n)=\left\{j\in
A:j\leq n\right\}$ and $\left|A(n)\right|$ represents the number
of elements in $A(n)$. Then $\mathcal I_d$ is a non-trivial ideal
in $\mathbb N$. Let $A=\{k^3:k\in\mathbb N\}$. Then $A$ is an
infinite subset of $\mathbb N$ and $A\in \mathcal I_d$. Let us
consider the function $u:X\rightarrow Y$ defined by $u(x)=0$,
$\forall x\in X$. Now for every $n\in A$, let us define $u_n:
X\rightarrow Y$ by
\[ u_n(x) = \left\{
  \begin{array}{l l}
    -nx-1, & \quad \text{if $-\frac{1}{n}\leq x$ }\\
    u(x), & \quad \text{$otherwise$}.
  \end{array} \right.\]\\
and for every $n\notin A$, we define, $u_n: X\rightarrow Y$  by
$u_n(x)=u(x)$, $\forall x\in X$. Then with respect to the
bornology $\mathcal B=\mathcal P_0(X)$, the sequence of partial
maps $ \{(X, u_n)\}_{n\in\mathbb N}$ is not $\mathcal P^+(\mathcal
B)$-convergent to $(X, u)$ but is $\mathcal P_{\mathcal
I_{d}}^+(\mathcal B)$-convergent to $(X, u)$.
\end{exam}

\begin{thm}\label{T1}
Let $\{(D_\gamma,u_\gamma)\}_{\gamma\in\mathcal G}$ be a net in
$\mathcal P[X,Y]$ and $\mathcal B$ be a bornology on $X$.
\begin{center}
$(a)$ If $(D,u)\in \mathcal {P_I}^-(\mathcal
B)-lim~(D_\gamma,u_\gamma)$, then $\forall B\in \mathcal B$ and
$\forall\epsilon>0$,  $\{\gamma\in\mathcal G:~D\cap B\subset
D_\gamma^\epsilon\}\in{\mathcal F(\mathcal I)}.$
\end{center}
\begin{center}
$(b)$ If $(D,u)\in \mathcal {P_I}^+(\mathcal
B)-lim~(D_\gamma,u_\gamma)$, then $\forall B\in \mathcal B$ and
$\forall\epsilon>0$,  $\{\gamma\in\mathcal G:~D_\gamma\cap
B\subset D^\epsilon\}\in{\mathcal F(\mathcal I)}.$
\end{center}
\end{thm}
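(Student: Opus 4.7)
The plan is to reduce the universal quantifier ``$\forall B_1 \subset B$'' appearing in the definitions of $\mathcal{P_I}^\pm(\mathcal B)$-convergence to the special case where $B_1$ is a singleton; this immediately converts the partial-map inclusion in the image space $Y$ into the desired set-theoretic inclusion about the domains in $X$.

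For part $(a)$, fix $B \in \mathcal B$ and $\epsilon > 0$. By the assumption $(D,u) \in \mathcal{P_I}^-(\mathcal B)-lim(D_\gamma,u_\gamma)$, the set
\begin{equation*}
A = \{\gamma \in \mathcal G : u(D \cap B_1) \subset [u_\gamma(D_\gamma \cap B_1^\epsilon)]^\epsilon \text{ for every } B_1 \subset B\}
\end{equation*}
belongs to $\mathcal F(\mathcal I)$. I would show $A \subset \{\gamma \in \mathcal G : D \cap B \subset D_\gamma^\epsilon\}$; the conclusion then follows from the upward closure of $\mathcal F(\mathcal I)$. Pick $\gamma \in A$ and any $x \in D \cap B$. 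Since $\{x\} \subset B$, applying the defining inclusion with $B_1 = \{x\}$ and using that $D \cap \{x\} = \{x\}$ together with $\{x\}^\epsilon = B(x,\epsilon)$ yields $u(x) \in [u_\gamma(D_\gamma \cap B(x,\epsilon))]^\epsilon$. Since the $\epsilon$-enlargement of an empty set is empty, $D_\gamma \cap B(x,\epsilon)$ must be non-empty, which means there exists $z \in D_\gamma$ with $d(x,z) < \epsilon$, i.e., $x \in D_\gamma^\epsilon$.

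Part $(b)$ is then handled by the symmetric specialization: starting from the $\mathcal F(\mathcal I)$-set on which $u_\gamma(D_\gamma \cap B_1) \subset [u(D \cap B_1^\epsilon)]^\epsilon$ holds for all $B_1 \subset B$, take any $\gamma$ in that set and any $x \in D_\gamma \cap B$; choosing $B_1 = \{x\}$ forces $u_\gamma(x) \in [u(D \cap B(x,\epsilon))]^\epsilon$, hence $D \cap B(x,\epsilon) \neq \emptyset$, so $x \in D^\epsilon$.

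I do not anticipate any real obstacle beyond the initial observation that the quantifier ``for every $B_1 \subset B$'' in the definition is strong enough to be tested against singletons; once this is noticed, the $\epsilon$-enlargement notation translates directly into pointwise closeness of a point of $D$ (resp.\ $D_\gamma$) to the other domain, and the rest is automatic from the upward closure of $\mathcal F(\mathcal I)$. Conceptually the theorem says that $\mathcal{P_I}^\pm(\mathcal B)$-convergence of partial maps is always at least as strong as $\mathcal B_\mathcal I^\pm$-convergence of the underlying domains.
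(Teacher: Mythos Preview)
Your argument is correct and is precisely the kind of direct unfolding the paper has in mind when it writes ``The proof directly follows from the definitions''; indeed the same singleton specialization $B_1=\{x\}$ is used explicitly later in the paper (in the proof of Theorem~\ref{T4}). There is nothing to add or compare.
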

%------------------Proof of Th 3.1-----------------------
\begin{proof}
The proof directly follows from the definitions.
\end{proof}

%---------------------------Th 3.2-----------------------------------------
\begin{thm}\label{T2}
 Let $\{(D_\gamma,u_\gamma)\}_{\gamma\in\mathcal G}$ be a net in $\mathcal P[X,Y]$ and $\mathcal B$ be a bornology on $X$. Then
\begin{center}
$(a)$ $(D,u)\in \mathcal {P_I}^+(\mathcal
B)-lim~(D_\gamma,u_\gamma)$ if and only if for every $B\in
\mathcal B$ and $\epsilon>0$,
\begin{center}
$\{\gamma\in\mathcal G:\underset{z\in D_\gamma\cap
B}{\sup}~~\underset{x\in D \cap
B_d(z,\epsilon)}{\inf}\mu(u(x),u_\gamma(z))<\epsilon\}\in{\mathcal
{F(I)}}.$
\end{center}
\end{center}
\begin{center}
$(b)$ $(D,u)\in \mathcal {P_I}^-(\mathcal
B)-lim~(D_\gamma,u_\gamma)$ if and only if for every $B\in
\mathcal B$ and $\epsilon>0$,
\begin{center}
$\{\gamma\in\mathcal G:\underset{z\in D\cap
B}{\sup}~~\underset{x\in D_\gamma\cap
B_d(z,\epsilon)}{\inf}\mu(u(z),u_\gamma(x))<\epsilon\}\in{\mathcal
{F(I)}}.$
\end{center}
\end{center}

\end{thm}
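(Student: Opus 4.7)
The plan is to establish both equivalences by unpacking each inclusion in the definition of $\mathcal {P_I}^{\pm}(\mathcal B)$-convergence into a pointwise condition, using two elementary observations. First, since the defining condition quantifies over every $B_1 \subset B$ with no restriction, we may specialize to singletons $B_1 = \{z\}$; this converts the set inclusion into the statement that $u_\gamma(z)$ lies within distance $\epsilon$ of $u(D \cap B_d(z,\epsilon))$, which is precisely an $\inf$ bound. Second, the implication ``$z \in B_1$ and $d(x,z) < \epsilon$ imply $x \in B_1^\epsilon$'' lets us promote a pointwise witness back to a statement about an arbitrary $B_1$. The forward directions use the first observation and the converse directions use the second.

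For part (a), the forward direction would proceed as follows. Fix $B \in \mathcal B$ and $\epsilon>0$, and apply $\mathcal {P_I}^+(\mathcal B)$-convergence at the smaller tolerance $\epsilon/2$; this produces a set $A \in \mathcal {F(I)}$ on which $u_\gamma(D_\gamma \cap B_1) \subset [u(D \cap B_1^{\epsilon/2})]^{\epsilon/2}$ holds for every $B_1 \subset B$. For $\gamma \in A$ and $z \in D_\gamma \cap B$, specializing to $B_1 = \{z\}$ yields some $x \in D \cap B_d(z,\epsilon/2)$ with $\mu(u(x),u_\gamma(z)) < \epsilon/2$; since $B_d(z,\epsilon/2) \subset B_d(z,\epsilon)$, taking the supremum over $z$ gives sup-inf $\le \epsilon/2 < \epsilon$, so $A$ is contained in the sup-inf set, which therefore belongs to $\mathcal {F(I)}$. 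For the converse, if $\gamma$ lies in the sup-inf set then for any $B_1 \subset B$ and $z \in D_\gamma \cap B_1 \subset D_\gamma \cap B$ we extract $x \in D \cap B_d(z,\epsilon)$ with $\mu(u(x),u_\gamma(z))<\epsilon$; since $z \in B_1$ and $d(x,z)<\epsilon$ we have $x \in B_1^\epsilon$, so $u_\gamma(z) \in [u(D \cap B_1^\epsilon)]^\epsilon$, giving the inclusion that defines $\mathcal {P_I}^+(\mathcal B)$-convergence.

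Part (b) will follow by the completely symmetric argument with the roles of $(D,u)$ and $(D_\gamma,u_\gamma)$ interchanged: use $B_1 = \{z\}$ with $z \in D \cap B$ for the forward direction, and the same ``enlargement lifts witnesses'' observation for the converse. I do not foresee a serious obstacle; the one point that demands care is invoking the hypothesis at tolerance $\epsilon/2$ rather than $\epsilon$ in the forward directions, so that after taking the supremum over $z$ the strict inequality $<\epsilon$ is preserved. The vacuous cases $D_\gamma \cap B = \emptyset$ for (a), and $D \cap B = \emptyset$ for (b), require no separate treatment.
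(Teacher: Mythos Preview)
Your proposal is correct and carries out precisely the kind of routine unpacking the authors had in mind; the paper itself omits the proof entirely, stating only that it is ``straight forward.'' Your care with halving $\epsilon$ in the forward directions (to preserve the strict inequality after taking the supremum) and your observation that singletons $B_1=\{z\}$ suffice are exactly the two ingredients needed, so there is nothing to add.
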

%----------------------Proof of Th 3.2------------------------------------------------
\begin{proof}
The proof is straight forward so is omitted.
\end{proof}

Now if $\mathcal B$ is a bornology on $X$, then $\{B\times Y:
B\in\mathcal B\}$ forms a base for some bornology $\mathcal B^*$
(say) on $X\times Y$. Using this bornology $\mathcal {B^*}$ on
$X\times Y$, we now show that the $\mathcal {P_I}^-(\mathcal B)$
and $\mathcal {P_I}^+(\mathcal B)$ convergences in $\mathcal
P[X,Y]$ are actually the lower and the upper bornological
$\mathcal I$-convergences of graphs respectively in $X\times Y$,
which extend the results of \cite{Le} as well as of \cite{Be}.

%-----------------------------------------------theorem 3.3--------------------------------
\begin{thm}
Let $\{(D_\gamma,u_\gamma)\}_{\gamma\in\mathcal G}$ be a net in
$\mathcal P[X,Y]$ and $\mathcal B$ be a bornology on $X$. Then,
for $(D,u)\in \mathcal P[X,Y]$, the following equivalences hold:
\begin{center}
$(a)$ $ Gr(u)\in(\mathcal {B^*_I})^--lim~Gr(u_\gamma)$ if and only
if $(D,u)\in\mathcal {P_I}^-(\mathcal B)-lim(D_\gamma,u_\gamma)$;
\end{center}
\begin{center}
$(b)$ $ Gr(u)\in(\mathcal {B^*_I})^+-lim~Gr(u_\gamma)$ if and only
if $(D,u)\in\mathcal {P_I}^+(\mathcal B)-lim(D_\gamma,u_\gamma)$.
\end{center}
\end{thm}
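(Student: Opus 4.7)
I would first equip $X \times Y$ with the product (max) metric $\rho((x,y),(x',y')) = \max(d(x,x'),\mu(y,y'))$, which is the natural choice for bornological convergence of graphs, so that $(z,w) \in Gr(u)^\epsilon$ iff there exists $x \in D$ with $d(z,x)<\epsilon$ and $\mu(w,u(x))<\epsilon$. Both equivalences then reduce to straightforward set-theoretic translations between Definition~\ref{Bi} applied to graphs in $X \times Y$ and the definition of $\mathcal{P_I}^{\pm}(\mathcal B)$-convergence. I will outline part (b); part (a) is obtained by interchanging the roles of $(D,u)$ and $(D_\gamma,u_\gamma)$ throughout.

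For the $(\Leftarrow)$ direction of (b), fix $B^{*} \in \mathcal{B}^{*}$ and $\epsilon>0$. Since $\{B \times Y : B \in \mathcal B\}$ is a base for $\mathcal{B}^{*}$, I would choose $B \in \mathcal{B}$ with $B^{*} \subset B \times Y$. The hypothesis $(D,u)\in\mathcal{P_I}^{+}(\mathcal B)\text{-}\lim(D_\gamma,u_\gamma)$ yields
\[ A := \{\gamma \in \mathcal G : \forall B_1 \subset B,\; u_\gamma(D_\gamma \cap B_1) \subset [u(D \cap B_1^\epsilon)]^\epsilon\} \in \mathcal{F(I)}. \]
For any $\gamma \in A$ and any point $(z,u_\gamma(z)) \in Gr(u_\gamma)\cap B^{*} \subset Gr(u_\gamma)\cap(B\times Y)$, specializing the universal inclusion to the singleton $B_1=\{z\}$ produces $x \in D$ with $d(x,z)<\epsilon$ and $\mu(u_\gamma(z),u(x))<\epsilon$, whence $(z,u_\gamma(z)) \in Gr(u)^\epsilon$. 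Thus $A$ is contained in $\{\gamma : Gr(u_\gamma)\cap B^{*} \subset Gr(u)^\epsilon\}$, and upward closure of the filter $\mathcal{F(I)}$ finishes this direction.

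For the $(\Rightarrow)$ direction, fix $B \in \mathcal B$ and $\epsilon>0$ and apply upper bornological $\mathcal I$-convergence of graphs to the element $B\times Y \in \mathcal{B}^{*}$: the set $A := \{\gamma : Gr(u_\gamma) \cap (B \times Y) \subset Gr(u)^\epsilon\}$ lies in $\mathcal{F(I)}$. For $\gamma \in A$, an arbitrary $B_1 \subset B$, and $z \in D_\gamma \cap B_1$, the pair $(z,u_\gamma(z))$ belongs to $Gr(u_\gamma)\cap(B\times Y)$, so the graph inclusion produces $x \in D$ with $d(z,x)<\epsilon$ and $\mu(u_\gamma(z),u(x))<\epsilon$; since $z \in B_1$, this forces $x \in D \cap B_1^\epsilon$, so $u_\gamma(z) \in [u(D\cap B_1^\epsilon)]^\epsilon$, which is the required inclusion. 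Part (a) follows verbatim after swapping the two partial maps. I do not anticipate any serious obstacle: the only subtle point is handling both the two-sided appearance of the base $\{B \times Y\}$—one intersects with an arbitrary $B^{*} \subset B\times Y$ in one direction and with the full $B \times Y$ in the other—and the quantification over all $B_1 \subset B$, which is tamed by specializing to singletons $B_1 = \{z\}$ together with the upward closure of $\mathcal{F(I)}$.
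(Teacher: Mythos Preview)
Your proof is correct. The paper takes a slightly less direct route: rather than working straight from the definition of $\mathcal{P_I}^{\pm}(\mathcal B)$-convergence, it first invokes the auxiliary characterizations of Theorems~\ref{T1} and~\ref{T2} (the latter being a $\sup$--$\inf$ reformulation), intersects the two resulting filter sets, and in the converse direction introduces an auxiliary $0<\eta<\epsilon$ so that the bound $\sup\leq\eta$ can be upgraded to the strict inequality $\sup<\epsilon$ required by Theorem~\ref{T2}. Your argument bypasses these intermediaries by specializing the universal quantifier over $B_1\subset B$ to singletons $B_1=\{z\}$, which is cleaner and makes the pointwise translation between graph inclusions and partial-map inclusions transparent; the paper's detour, on the other hand, has the side benefit of illustrating how Theorem~\ref{T2} functions as an independent characterization.
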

%--------------------------------Proof of theorem 3.1--------------------------------------
\begin{proof}
$(a)$ Let $(D,u)\in\mathcal {P_I}^-(\mathcal
B)-lim(D_\gamma,u_\gamma)$. To verify bornological convergence of
graphs, it is suffices to work with the basic sets in $\mathcal
B^*$. Let $B\times Y\in \mathcal B^*$, where $B\in\mathcal B$. Let
$\epsilon>0$ be given. Then by Theorem \ref{T1} and Theorem
\ref{T2} we have
\begin{center}
$A=\{\gamma\in \mathcal G:D\cap B\subset D_\gamma^\epsilon\}\in
{\mathcal {F(I)}}$ and
\end{center}
\begin{center}
$C=\{\gamma\in\mathcal G:\underset{z\in D\cap
B}{\sup}~~\underset{x\in D_\gamma\cap
B_d(z,\epsilon)}{\inf}\mu(u(z),u_\gamma(x))<\epsilon\}\in{\mathcal
{F(I)}}$.
\end{center}
Now $A, ~C\in{\mathcal {F(I)}}\Rightarrow A\cap C\in{\mathcal
{F(I)}}.$ Let $\gamma\in A\cap C$. Then both
\begin{equation}\label{eq1}
 D\cap B\subset D_\gamma^\epsilon
\end{equation}
and
\begin{equation}
\underset{z\in D\cap B}{\sup}~~\underset{x\in D_\gamma\cap
B_d(z,\epsilon)}{\inf}\mu(u(z),u_\gamma(x))<\epsilon \label{eq2}
\end{equation}
hold. Now fix $(z,u(z))\in (B\times Y)\cap Gr(u)$ so that $z\in
D$. Then by (\ref{eq1}) $B_d(z,\epsilon)\cap D_\gamma\neq \phi$
and by (\ref{eq2}) there exists some $x\in B_d(z,\epsilon)\cap
D_\gamma$, so that $\mu(u_\gamma(x),u(z))<\epsilon.$ So we have
$(x,u_\gamma(x))\in Gr(u_\gamma)$ and $(d\times
\mu)((z,u(z)),(x,u_\gamma(x)))<\epsilon$. This gives $Gr(u)\cap
(B\times Y)\subset Gr(u_\gamma)^\epsilon$. Therefore
\begin{center}
$A\cap C\subset \{\gamma\in\mathcal G:~ Gr(u)\cap(B\times
Y)\subset Gr(u_\gamma)^\epsilon\}.$
\end{center}
Since $A\cap C \in{\mathcal {F(I)}}$, we have $\{\gamma\in\mathcal
G:~ Gr(u)\cap(B\times Y)\subset Gr(u_\gamma)^\epsilon \}\in
{\mathcal {F(I)}}$.
Hence $ Gr(u)\in(\mathcal {B^*_I})^--lim~Gr(u_\gamma)$.\\

Conversely, let $ Gr(u)\in(\mathcal {B^*_I})^--lim~Gr(u_\gamma)$.
Let $B\in\mathcal B$ and $\epsilon>0$ be given. Choosing
$0<\eta<\epsilon$, we have
\begin{center}
$A_1=\{\gamma\in\mathcal G:~ Gr(u)\cap(B\times Y)\subset
Gr(u_\gamma)^\eta \}\in {\mathcal {F(I)}}.$
\end{center}
Let $\gamma\in A_1$ and $z\in D\cap B$ be arbitrary. Then
$(z,u(z))\in (B\times Y)\cap Gr(u)$. So there exists
$(y_0,u_\gamma(y_0))\in Gr(u_\gamma)$ such that
\begin{center}
$(d\times\mu)((z,u(z)),(y_0,u_\gamma(y_0)))<\eta$.
\end{center}
Thus we get $y_0\in D_\gamma$ such that $d(z,y_0)<\eta<\epsilon$
as well as $\mu(u(z),u_\gamma(y_0))<\eta$. Therefore
\begin{center}
$\underset{x\in B_d(z,\epsilon)\cap
D_\gamma}{\inf}\mu(u(z),u_\gamma(x))<\eta$.
\end{center}
\noindent Since $z\in D\cap B$ is arbitrary, we have
\begin{center}
$\underset{z\in D\cap B}{\sup}~~\underset{x\in B_d(z,\epsilon)\cap
D_\gamma}{\inf}\mu(u(z),u_\gamma(x))\leq\eta<\epsilon.$
\end{center}
Thus
\begin{center}
$A_1\subset\{\gamma\in\mathcal G:\underset{z\in D\cap
B}{\sup}~~\underset{x\in B_d(z,\epsilon)\cap
D_\gamma}{\inf}\mu(u(z),u_\gamma(x))<\epsilon\}.$
\end{center}
Since $A_1\in{\mathcal {F(I)}}$, we have $\{\gamma\in\mathcal
G:\underset{z\in D\cap B}{\sup}~~\underset{x\in
B_d(z,\epsilon)\cap
D_\gamma}{\inf}\mu(u(z),u_\gamma(x))<\epsilon\}\in {\mathcal
{F(I)}}$.
Then by Theorem \ref{T2} $(b)$ we have $(D,u)\in\mathcal {P_I}^-(\mathcal B)-lim(D_\gamma,u_\gamma)$. This completes the proof.\\
\noindent $(b)$ The proof of $(b)$ is similar to that of $(a)$, so
it is omitted.

\end{proof}

\begin{cor}
Let $\{(D_\gamma,u_\gamma)\}_{\gamma\in\mathcal G}$ be a net in
$\mathcal P[X,Y]$ and $\mathcal B$ be a bornology on $X$. Then
$\mathcal {P_I}(\mathcal B)$-convergence on $\mathcal P[X,Y]$
coincide with $\mathcal{B^*_I}$-convergence of graphs of partial
maps in $X\times Y$.
\end{cor}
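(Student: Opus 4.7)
The plan is to observe that this corollary is nothing more than the conjunction of the two equivalences established in the preceding theorem. Recall that, by definition just after Definition \ref{Bi}, the statement $D \in \mathcal B_{\mathcal I}\text{-}\lim D_\gamma$ is by convention the conjunction
\[
D \in \mathcal B_{\mathcal I}^-\text{-}\lim D_\gamma \ \text{ and }\ D \in \mathcal B_{\mathcal I}^+\text{-}\lim D_\gamma,
\]
and similarly $(D,u)\in \mathcal P_{\mathcal I}(\mathcal B)\text{-}\lim(D_\gamma,u_\gamma)$ means
\[
(D,u)\in \mathcal P_{\mathcal I}^-(\mathcal B)\text{-}\lim(D_\gamma,u_\gamma)\ \text{ and }\ (D,u)\in \mathcal P_{\mathcal I}^+(\mathcal B)\text{-}\lim(D_\gamma,u_\gamma).
\]

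First I would apply these definitions to the graphs: by the definition of $\mathcal{B^*_I}$-convergence applied to the nets $\{Gr(u_\gamma)\}_{\gamma \in \mathcal G}$ and $Gr(u)$ in $\mathcal P_0(X \times Y)$, we have $Gr(u)\in \mathcal{B^*_I}\text{-}\lim Gr(u_\gamma)$ if and only if both $Gr(u)\in (\mathcal{B^*_I})^-\text{-}\lim Gr(u_\gamma)$ and $Gr(u)\in (\mathcal{B^*_I})^+\text{-}\lim Gr(u_\gamma)$ hold.

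Next I would invoke parts (a) and (b) of the preceding theorem to rewrite each of these graph convergences in terms of partial map convergence. Specifically, part (a) gives
\[
Gr(u)\in (\mathcal{B^*_I})^-\text{-}\lim Gr(u_\gamma) \Longleftrightarrow (D,u)\in\mathcal{P_I}^-(\mathcal B)\text{-}\lim(D_\gamma,u_\gamma),
\]
and part (b) gives the analogous equivalence with the minus signs replaced by plus signs. Conjoining these two biconditionals and using the defining conjunctions above yields the desired equivalence between $\mathcal{P_I}(\mathcal B)$-convergence of $(D_\gamma,u_\gamma)$ to $(D,u)$ and $\mathcal{B^*_I}$-convergence of $Gr(u_\gamma)$ to $Gr(u)$.

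There is no real obstacle here; the corollary is formally immediate from the preceding theorem once one unpacks the definitions of two-sided convergence on both sides. The only thing to keep in mind is that the bornology $\mathcal B^*$ on $X \times Y$ is the one generated by the base $\{B \times Y : B \in \mathcal B\}$, which is precisely the setting already used in the proof of the theorem, so no additional verification about the choice of basic sets is needed.
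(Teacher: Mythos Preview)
Your argument is correct and is exactly the intended one: the paper states this corollary without proof because it is an immediate consequence of combining parts (a) and (b) of the preceding theorem with the definitions of two-sided $\mathcal{P_I}(\mathcal B)$- and $\mathcal{B^*_I}$-convergence. There is nothing to add.
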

%............................Th3.5.....................................
\begin{thm}\label{T4}
Let $\{(D_\gamma,u_\gamma)\}_{\gamma\in\mathcal G}$ be a net in
$\mathcal P[X,Y]$ and $\mathcal B$ be a local bornology on $X$. If
the net $\{(D_\gamma,u_\gamma)\}_{\gamma\in\mathcal G}$ is
$\mathcal{P_I}(\mathcal B)$-convergent to both the partial maps
$(S,u)$ and $(T,v)$, then $S=T$.
\end{thm}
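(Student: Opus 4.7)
The plan is a standard uniqueness-of-limit argument, using the fact that $\mathcal{P_I}(\mathcal{B})$-convergence forces bornological $\mathcal{I}$-convergence of the domains together with the locality of $\mathcal{B}$. First, by Theorem \ref{T1} applied to both limits, we get that for every $B \in \mathcal{B}$ and every $\epsilon > 0$, each of the following four sets belongs to $\mathcal{F(I)}$:
\begin{align*}
A_1 &= \{\gamma : S \cap B \subset D_\gamma^{\epsilon}\}, \quad A_2 = \{\gamma : D_\gamma \cap B \subset S^{\epsilon}\}, \\
A_3 &= \{\gamma : T \cap B \subset D_\gamma^{\epsilon}\}, \quad A_4 = \{\gamma : D_\gamma \cap B \subset T^{\epsilon}\}.
\end{align*}

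Next, I would argue by contradiction: suppose $S \neq T$, and without loss of generality pick $z \in S \setminus T$. Since domains of partial maps are closed, $T$ is closed and $z \notin T$, so $r := d(z,T) > 0$. Because $\mathcal{B}$ is local, there exists $B \in \mathcal{B}$ and $\delta > 0$ with $B_d(z,\delta) \subset B$; in particular $z \in S \cap B$. Fix $\epsilon > 0$ with $\epsilon < \min\{\delta,\, r/2\}$.

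The key step is to use $A_1$ and $A_4$ together. Since $\mathcal{F(I)}$ is a filter, $A_1 \cap A_4 \in \mathcal{F(I)}$, and in particular $A_1 \cap A_4 \neq \emptyset$. Pick any $\gamma \in A_1 \cap A_4$. From $z \in S \cap B \subset D_\gamma^{\epsilon}$ there is $y \in D_\gamma$ with $d(y,z) < \epsilon < \delta$, hence $y \in B_d(z,\delta) \subset B$, so $y \in D_\gamma \cap B \subset T^{\epsilon}$. Then there is $t \in T$ with $d(y,t) < \epsilon$, and the triangle inequality gives $d(z,t) < 2\epsilon < r = d(z,T)$, a contradiction. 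Hence no such $z$ exists, and by symmetry $S = T$.

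There is no serious obstacle here; the only mildly subtle point is making sure the ball produced by the $\mathcal{B}^-_{\mathcal{I}}$ condition lands inside $B$, which is precisely what the locality of $\mathcal{B}$ buys us when we choose $\epsilon < \delta$. Note also that we only need the $\mathcal{B}^-_{\mathcal{I}}$-convergence to $S$ and the $\mathcal{B}^+_{\mathcal{I}}$-convergence to $T$ (and the symmetric pair), not the full $\mathcal{P_I}(\mathcal{B})$ hypothesis on the maps $u_\gamma$ themselves.
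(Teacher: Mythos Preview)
Your proof is correct. The overall skeleton is the same as the paper's --- pick $z\in S\setminus T$, use closedness of $T$ to separate $z$ from $T$, use locality of $\mathcal B$ to get a small ball around $z$ in $\mathcal B$, then play the lower convergence to $S$ against the upper convergence to $T$ to get a contradiction --- but the two arguments differ in where they work. The paper argues directly with the partial-map definitions: it shows that $u_\gamma(D_\gamma\cap\{x\}^\delta)\neq\emptyset$ from the $\mathcal P_{\mathcal I}^-(\mathcal B)$-condition with $B_1=\{x\}$, and then observes that $v(T\cap B^{\epsilon/2})=\emptyset$, so the $\mathcal P_{\mathcal I}^+(\mathcal B)$-condition to $(T,v)$ must fail. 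You instead invoke Theorem~\ref{T1} once at the start to pass to bornological $\mathcal I$-convergence of the domains and then argue purely at the level of the sets $S$, $T$, $D_\gamma$, never touching $u$, $v$, or $u_\gamma$ again. Your route is a bit more transparent and makes explicit the observation you note at the end: only $\mathcal B_{\mathcal I}^-$-convergence of domains to $S$ and $\mathcal B_{\mathcal I}^+$-convergence to $T$ are actually used, so the statement is really a uniqueness result for two-sided bornological $\mathcal I$-limits of closed sets. The paper's argument obscures this slightly by carrying the function values through, though of course it is doing the same thing underneath.
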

\begin{proof}
Let the net $\{(D_\gamma,u_\gamma)\}_{\gamma\in\mathcal G}$ be
$\mathcal {P_I}(\mathcal B)$-convergent to both the partial maps
$(S,u)$ and $(T,v)$. If possible, let $S\neq T$. Then without any
loss of generality we assume $x\in S\setminus T$. Since $T$ is a
closed subset of $X$, we have $\{x\}^\epsilon\cap T=\emptyset$ for
some $\epsilon>0$. Choose $0<\delta<\frac{\epsilon}{2}$ so that
$\{x\}^\delta\in \mathcal B$. Let $B=\{x\}^\delta$. Since the net
is $\mathcal {P_I}^-(\mathcal B)$-convergent to $(S,u)$, we have
\begin{center}
$\{ \gamma\in\mathcal G: \forall B_1(\subset B), u(S\cap
B_1)\subset [u_\gamma(D_\gamma\cap B_1^\delta)]^\delta\}\in
\mathcal {F(I)}$.
\end{center}
Let us take $B_1=\{x\}$. Then we have
\begin{center}
$P=\{ \gamma\in\mathcal G:  u(S\cap \{x\})\subset
[u_\gamma(D_\gamma\cap \{x\}^\delta)]^\delta\}\in \mathcal
{F(I)}$.
\end{center}
Since $u(S\cap\{x\})\neq\emptyset$, we have $u_\gamma(D\gamma\cap
\{x\}^\delta)\neq \emptyset$ for all $\gamma\in P$. Now
\begin{center}
$v(T\cap B^{\frac{\epsilon}{2}})\subset v(T\cap
\{x\}^\epsilon)=\emptyset$.
\end{center}
Thus for all $\gamma\in P$ we have
\begin{center}
$ u_\gamma(D_\gamma\cap B)\not\subset {[v(T\cap
B^{\frac{\epsilon}{2}})]^{\frac{\epsilon}{2}}} .$
\end{center}
Hence
\begin{center}
$P\subset \{\gamma\in\mathcal G: u_\gamma(D_\gamma\cap
B)\not\subset {[v(T\cap
B^{\frac{\epsilon}{2}})]^{\frac{\epsilon}{2}}} \} $.
\end{center}
Since $P\in \mathcal {F(I)}$, we have
 \begin{center}
$ \{\gamma\in\mathcal G: u_\gamma(D_\gamma\cap B)\not\subset
{[v(T\cap B^{\frac{\epsilon}{2}})]^{\frac{\epsilon}{2}}} \}\in
\mathcal {F(I)} $.
\end{center}
Thus
\begin{center}
$ \{\gamma\in\mathcal G: u_\gamma(D_\gamma\cap B)\subset {[v(T\cap
B^{\frac{\epsilon}{2}})]^{\frac{\epsilon}{2}}} \}\notin\mathcal
{F(I)} $,
\end{center}
which is a contradiction, since
$\{(D_\gamma,u_\gamma)\}_{\gamma\in\mathcal G}$ is $\mathcal {
P_{I}(B)} $-convergent to $(T, v)$. Therefore $S=T$.
\end{proof}

\section{\textbf{bornological $\mathcal I^*$-Convergence and $\mathcal I^*$-Convergence of nets of partial maps }}
In this section, we introduce the notions of bornological
$\mathcal I^*$-convergence and $\mathcal I^*$-convergence of nets
of partial maps and study their relationship with bornological
$\mathcal I$-convergence and $\mathcal I$-convergence of nets of
partial maps.

\begin{defn}
Let $(X,d)$ be a metric space and $ \mathcal B $ be a bornology on
$X$. A net $ \{D_\gamma\}_{\gamma\in \mathcal G} \in \mathcal P_0
(X) $ is said to be $\mathcal B_{\mathcal I^*}$-convergent
(bornological $\mathcal I^*$ convergent) to $D \in \mathcal P_0
(X)$ if there exists a set $\mathcal G'\in \mathcal {F(I)}$ such
that $\mathcal G'$ itself is a directed set with respect to the
binary relation induced from $(\mathcal G, \geq)$ and the net
$\{D_\gamma\}_{\gamma\in \mathcal G'}$ is $\mathcal B$-convergent
to $D$.

In this case, we write $D\in\mathcal B_{\mathcal I^*}-lim
D_\gamma$.
\end{defn}

\begin{thm}
Let $\mathcal I$ be a $\mathcal G$-$admissible $ ideal of a
directed set $(\mathcal G, \geq)$, $\{D_\gamma\}_{\gamma\in
\mathcal G}$ be a net in $\mathcal P_0 (X)$ and $ D \in \mathcal
P_0 (X)$. Then $D\in\mathcal B_{\mathcal I^*}-lim D_\gamma$
implies $D\in\mathcal B_{\mathcal I}-lim D_\gamma$.
\end{thm}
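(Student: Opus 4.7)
The plan is to unpack the definitions and use the filter properties of $\mathcal F(\mathcal I)$ together with the $\mathcal G$-admissibility hypothesis. Assume $D \in \mathcal B_{\mathcal I^*}\text{-}\lim D_\gamma$, so there is a directed subset $\mathcal G' \in \mathcal F(\mathcal I)$ (directed by the restriction of $\geq$) along which $\{D_\gamma\}_{\gamma \in \mathcal G'}$ is $\mathcal B$-convergent to $D$. The goal is to show that for arbitrary $B \in \mathcal B$ and $\epsilon > 0$ both of the sets
\begin{center}
$\{\gamma \in \mathcal G : D \cap B \subset D_\gamma^\epsilon\}$ and $\{\gamma \in \mathcal G : D_\gamma \cap B \subset D^\epsilon\}$
\end{center}
belong to $\mathcal F(\mathcal I)$.

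First I would handle the lower half: fix $B \in \mathcal B$ and $\epsilon > 0$. By the $\mathcal B^-$-convergence of $\{D_\gamma\}_{\gamma \in \mathcal G'}$ to $D$, there exists $\gamma_0 \in \mathcal G'$ such that for every $\gamma \in \mathcal G'$ with $\gamma \geq \gamma_0$ one has $D \cap B \subset D_\gamma^\epsilon$. Hence
\begin{center}
$\mathcal G' \cap \mathcal M_{\gamma_0} \subset \{\gamma \in \mathcal G : D \cap B \subset D_\gamma^\epsilon\}.$
\end{center}
Since $\mathcal I$ is $\mathcal G$-admissible, $\mathcal M_{\gamma_0} \in \mathcal F(\mathcal I)$, and because $\mathcal G' \in \mathcal F(\mathcal I)$ as well, the intersection $\mathcal G' \cap \mathcal M_{\gamma_0}$ lies in $\mathcal F(\mathcal I)$. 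By the superset closure property of filters, the set on the right also lies in $\mathcal F(\mathcal I)$, giving $D \in \mathcal B_{\mathcal I}^-\text{-}\lim D_\gamma$.

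Applying an identical argument to the upper condition $D_\gamma \cap B \subset D^\epsilon$ yields $D \in \mathcal B_{\mathcal I}^+\text{-}\lim D_\gamma$, and combining the two gives $D \in \mathcal B_{\mathcal I}\text{-}\lim D_\gamma$, completing the proof. There is no real obstacle here; the only point that genuinely requires the $\mathcal G$-admissibility assumption is the step where we need $\mathcal M_{\gamma_0} \in \mathcal F(\mathcal I)$ in order to transfer the tail condition on $\mathcal G'$ into a filter-large subset of the full directed set $\mathcal G$. Everything else is bookkeeping with the filter axioms.
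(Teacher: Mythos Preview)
Your proof is correct and follows essentially the same approach as the paper: both unpack the $\mathcal I^*$-convergence to obtain $\mathcal G'\in\mathcal F(\mathcal I)$, use ordinary bornological convergence along $\mathcal G'$ to find a tail $\gamma_0$, and then invoke $\mathcal G$-admissibility to conclude $\mathcal G'\cap\mathcal M_{\gamma_0}\in\mathcal F(\mathcal I)$ lies inside the desired sets. The only cosmetic difference is that the paper treats the upper and lower conditions with a single $\gamma_0$ simultaneously, whereas you handle them in two separate passes.
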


\begin{proof}
Let $D\in\mathcal B_{\mathcal I^*}-lim D_\gamma$. Then there
exists $\mathcal G'\in \mathcal {F(I)}$ such that $\mathcal G'$
itself is a directed set with respect to the binary relation
induced from $(\mathcal G, \geq)$ and $\{D_\gamma\}_{\gamma\in
\mathcal G'}$ is $\mathcal B$-convergent to $D$.

Let $B\in\mathcal B$ and $\epsilon>0$ be given. Then there exists
$\gamma_0\in \mathcal G'$, such that for all $\gamma\in \mathcal
G'$ with $\gamma\geq\gamma_0$, we have
\begin{center}
$D\cap B\subset D^\epsilon_\gamma ~ and ~ D_\gamma\cap D\subset
D^\epsilon$.
\end{center}
Since $\mathcal I$ is $\mathcal G$-admissible, $ \mathcal
M_{\gamma_0} \in \mathcal {F(I)}$. Then $\mathcal
M_{\gamma_0}\cap\mathcal G'\in \mathcal {F(I)}$. Now
\begin{center}
$\mathcal M_{\gamma_0}\cap\mathcal G'\subset\{\gamma\in\mathcal G:
D\cap B\subset D^\epsilon_\gamma\} \cap  \{\gamma\in\mathcal G:
D_\gamma\cap B\subset D^\epsilon\}$.
\end{center}
So $\{\gamma\in\mathcal G: D\cap B\subset
D^\epsilon_\gamma\}\in\mathcal {F(I)}$  as well as
$\{\gamma\in\mathcal G: D_\gamma\cap B\subset
D^\epsilon\}\in\mathcal {F(I)}$. Hence $D\in\mathcal B_{\mathcal
I}-lim D_\gamma$.
\end{proof}

\begin{defn}\cite{La}
Let $\mathcal I$ be a $\mathcal G$-$admissible $ ideal of a
directed set $\mathcal G$. Then $\mathcal I$ is said to satisfy
the condition (DP) if for every countable family of mutually
disjoint sets $\{P_1,P_2,...\}$ in $\mathcal I$ there exists a
countable family of sets $\{Q_1, Q_2,...\}$ in $\mathcal G$ such
that for each $i\in\mathbb N$, $P_i\Delta Q_i\subset \mathcal
G\setminus \mathcal M_{\gamma_i}$ for some $\gamma_i\in\mathcal G$
and $Q= \bigcup\limits_{ i= 1}^{\infty} Q_i\in\mathcal I$, where
$\Delta$ stands for the symmetric difference between two sets.
\end{defn}
\begin{lem}\label{La}
Let $\mathcal I$ be an ideal of a directed set $(\mathcal G,
\geq)$ satisfying the condition (DP). Then for any countable
family of sets $\{E_1,E_2,...\}$ in $\mathcal {F(I)}$ there exists
a $E\in\mathcal {F(I)}$ such that $E$ itself is a directed set
with respect to the binary relation induced from $(\mathcal G,
\geq)$ and for each $ i \in \mathbb{N}$, $E\setminus E_i \subset
\mathcal G\setminus \mathcal M_{\gamma^{(i)}}$ for some
$\gamma^{(i)}\in\mathcal G$.
\end{lem}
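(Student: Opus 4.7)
The plan is to reduce the statement to the hypothesis (DP) by replacing the filter sets $E_i$ with their complements and suitably disjointifying. For each $i$, set $A_i := \mathcal G \setminus E_i \in \mathcal I$, and define $P_1 := A_1$ and $P_i := A_i \setminus \bigcup_{j<i} A_j$ for $i \geq 2$. Then the $P_i$'s are mutually disjoint, each remains in $\mathcal I$, and an easy minimal-index argument shows $A_i \subset \bigcup_{j=1}^i P_j$ for every $i$; this last observation is what will let me control $E \setminus E_i$ by finitely many of the $P_j$'s.

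I then apply condition (DP) to the disjoint family $\{P_1, P_2, \ldots\}$ to obtain sets $Q_i \subset \mathcal G$ and elements $\gamma_i \in \mathcal G$ such that $P_i \Delta Q_i \subset \mathcal G \setminus \mathcal M_{\gamma_i}$ for each $i$, and $Q := \bigcup_{i=1}^\infty Q_i \in \mathcal I$. The candidate set is $E := \mathcal G \setminus Q$, which lies in $\mathcal F(\mathcal I)$ by construction. For the containment property, $E \setminus E_i = E \cap A_i \subset \bigcup_{j=1}^i (E \cap P_j)$; for each $j \leq i$, the inclusion $Q \supset Q_j$ gives $E \cap P_j \subset P_j \setminus Q_j \subset P_j \Delta Q_j \subset \mathcal G \setminus \mathcal M_{\gamma_j}$. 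Using directedness of $\mathcal G$, I pick $\gamma^{(i)} \in \mathcal G$ with $\gamma^{(i)} \geq \gamma_j$ for all $j \leq i$; then $\mathcal M_{\gamma^{(i)}} \subset \mathcal M_{\gamma_j}$ for every $j \leq i$, so $\bigcup_{j=1}^i (\mathcal G \setminus \mathcal M_{\gamma_j}) \subset \mathcal G \setminus \mathcal M_{\gamma^{(i)}}$, which yields $E \setminus E_i \subset \mathcal G \setminus \mathcal M_{\gamma^{(i)}}$.

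The main subtlety is verifying that $E$ is directed under the inherited relation. Given $\alpha, \beta \in E$, I first use directedness of $\mathcal G$ to pick $\eta \in \mathcal G$ with $\eta \geq \alpha$ and $\eta \geq \beta$. Because $\mathcal I$ is $\mathcal G$-admissible (which is built into the statement of (DP)), $\mathcal M_\eta \in \mathcal F(\mathcal I)$, and since $E \in \mathcal F(\mathcal I)$ as well, the intersection $E \cap \mathcal M_\eta$ belongs to $\mathcal F(\mathcal I)$ and is therefore nonempty. Any $\delta$ from this intersection lies in $E$ and satisfies $\delta \geq \eta \geq \alpha,\beta$, witnessing the required upper bound inside $E$. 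This completes the construction; the only place where real care is needed is this last directedness check, since the disjointification and the (DP)-application are essentially bookkeeping.
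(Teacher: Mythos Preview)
Your proof is correct and follows essentially the same approach as the paper: complement the $E_i$'s to land in $\mathcal I$, disjointify, apply (DP), take $E=\mathcal G\setminus Q$, and verify directedness of $E$ via $\mathcal G$-admissibility and the nonemptiness of $E\cap\mathcal M_\eta\in\mathcal F(\mathcal I)$. The only cosmetic difference is notation ($A_i$ versus the paper's $F_i$) and your explicit mention of the minimal-index argument for $A_i\subset\bigcup_{j\le i}P_j$, which the paper leaves implicit.
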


\begin{proof}
Let $\{E_1,E_2,...\}$  be a countable family of sets in $\mathcal
{F(I)}$. Then $\{F_1,F_2,...\}$ is a countable family of sets in
$\mathcal I$, where $F_i=\mathcal G\setminus E_i, \forall i \in
\mathbb{N}$. Now we construct a sequence of sets
$\{P_i\}_{i\in\mathbb N}$ as follows:
\begin{center}
$P_1=F_1$, $P_2=F_2\setminus F_1$,..., $P_i=F_i\setminus ( F_1\cup
F_2\cup...\cup F_{i-1}), \ldots$
\end{center}
Clearly, $P_i\in \mathcal I$, for all $i\in\mathbb N$ and $P_i
\cap P_j = \emptyset$ for $ i \neq j$. Since $\mathcal I$
satisfies the condition (DP), there exists a countable family of
sets $\{Q_1,Q_2,...\}$ in $\mathcal G$ such that for each
$i\in\mathbb N$, $P_i\Delta Q_i\subset \mathcal G \setminus
M_{\gamma_i}$ for some $\gamma_{i}\in\mathcal G$ and $Q=
\bigcup\limits_{ i =1 }^{ \infty} Q_i\in\mathcal I$. Now, fix $ i
\in \mathbb{N}$. Then
\begin{center}
$F_i\setminus Q\subset (\bigcup\limits_{j=1}^{i} F_j) \setminus Q
= (\bigcup\limits_{j=1}^{i} P_j) \setminus Q =
\bigcup\limits_{j=1}^{i}(P_j\setminus Q)
\subset\bigcup\limits_{j=1}^{i} (P_j\setminus
Q_j)\subset\bigcup\limits_{j=1}^{i} (P_j \Delta Q_j)
\subset\bigcup\limits_{j=1}^{i} (\mathcal G \setminus \mathcal M_{
\gamma _j}) $.
\end{center}
Now for $\gamma_1,\gamma_2,...,\gamma_i$, there exists
$\gamma^{(i)}\in \mathcal G$ such that $\gamma^{(i)}\geq
\gamma_j$, $\forall j=1,2,...,i$. Then
\begin{center}
$F_i\setminus Q\subset\bigcup\limits_{j=1}^{i} (\mathcal
G\setminus\mathcal M_{\gamma_j})\subset\mathcal G\setminus\mathcal
M_{\gamma^{(i)}} $.
\end{center}
Let $E=\mathcal G\setminus Q$. Then $E\in\mathcal {F(I)}$ and
$E\setminus E_i=F_i\setminus Q\subset G\setminus\mathcal
M_{\gamma^{(i)}} $.

We now show that $E$ itself is a directed set with respect to the
binary relation induced from $(\mathcal G, \geq)$. It is clear
that $\geq$ is reflexive and transitive on $E$. Now, let $e_1,
e_2\in E$. Since $e_1, e_2$  are two elements of $\mathcal G$,
there exists $e\in\mathcal G$ such that $e\geq e_1$ and $e\geq
e_2$. Now $E\cap\mathcal M_e\in\mathcal {F(I)} $. Let $e'\in
E\cap\mathcal M_e $. Then $e'\geq e$ and so $e'\geq e_1$ and
$e'\geq e_2$. Therefore $E$ is a directed set with respect to the
binary relation induced from $(\mathcal G, \geq)$.
\end{proof}

\begin{thm}
Let $\mathcal I$ be an ideal on a directed set $(\mathcal G,
\geq)$ satisfying the condition (DP), $(X, d)$ be a metric space
and $\mathcal B$ be a bornology on $X$. Then for any net
$\{D_\gamma\}_{\gamma\in \mathcal G}$ in $\mathcal P_0 (X)$ and $
D \in \mathcal P_0 (X)$, $D\in\mathcal B_{\mathcal I}-lim
D_\gamma$ implies $D\in\mathcal B_{\mathcal I^*}-lim D_\gamma$.
\end{thm}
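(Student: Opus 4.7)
The plan is to mimic the classical Kostyrko--Ma\v{c}aj--\v{S}al\'at argument relating $\mathcal{I}$- and $\mathcal{I}^*$-convergence: invoke Lemma \ref{La} (which is exactly the countable amalgamation tool the condition (DP) provides) to extract a single directed subset $\mathcal{G}' \in \mathcal{F(I)}$ along which the ordinary $\mathcal{B}$-convergence $D_\gamma \to D$ holds.

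To set up the countable family needed by Lemma \ref{La}, fix a countable base $\{B_k\}_{k \in \mathbb{N}}$ for the bornology $\mathcal{B}$. For each pair $(k,n) \in \mathbb{N} \times \mathbb{N}$, the hypothesis $D \in \mathcal{B}_{\mathcal{I}}-lim~D_\gamma$ gives
\begin{center}
$A_{k,n} = \{\gamma \in \mathcal{G} : D \cap B_k \subset D_\gamma^{1/n}\} \in \mathcal{F(I)}$ \quad and \quad $C_{k,n} = \{\gamma \in \mathcal{G} : D_\gamma \cap B_k \subset D^{1/n}\} \in \mathcal{F(I)}$,
\end{center}
so $E_{k,n} := A_{k,n} \cap C_{k,n} \in \mathcal{F(I)}$. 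Enumerate this countable collection as $\{E_m\}_{m \in \mathbb{N}}$ and apply Lemma \ref{La} to produce a directed $\mathcal{G}' \in \mathcal{F(I)}$ together with elements $\gamma^{(m)} \in \mathcal{G}$ such that $\mathcal{G}' \setminus E_m \subset \mathcal{G} \setminus \mathcal{M}_{\gamma^{(m)}}$ for every $m$; equivalently, every $\gamma \in \mathcal{G}'$ with $\gamma \geq \gamma^{(m)}$ lies in $E_m$.

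It then remains to verify $\mathcal{B}$-convergence of $\{D_\gamma\}_{\gamma \in \mathcal{G}'}$ to $D$. Given arbitrary $B \in \mathcal{B}$ and $\epsilon > 0$, choose $k$ with $B \subset B_k$ and $n$ with $1/n < \epsilon$, and let $m$ index $E_{k,n}$ in the enumeration. Since $\mathcal{G}'$ is directed, there is $\gamma_0 \in \mathcal{G}'$ with $\gamma_0 \geq \gamma^{(m)}$, and for every $\gamma \in \mathcal{G}'$ with $\gamma \geq \gamma_0$ we have $\gamma \in E_{k,n}$; hence $D \cap B \subset D \cap B_k \subset D_\gamma^{1/n} \subset D_\gamma^\epsilon$ and symmetrically $D_\gamma \cap B \subset D_\gamma \cap B_k \subset D^{1/n} \subset D^\epsilon$. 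This is the required eventual inclusion along $\mathcal{G}'$, yielding $D \in \mathcal{B}_{\mathcal{I}^*}-lim~D_\gamma$.

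The main obstacle is the countability step: Lemma \ref{La} applies only to countable families in $\mathcal{F(I)}$, so the argument hinges on replacing ``for every $B \in \mathcal{B}$'' by ``for every $B$ in a countable subfamily''. Using a countable base for $\mathcal{B}$ together with the obvious monotonicity of $B \mapsto D \cap B$ and $B \mapsto D_\gamma \cap B$ effects this reduction cleanly; once done, the remainder is routine bookkeeping with the thresholds $\gamma^{(m)}$ produced by the lemma and the fact that $\mathcal{G}' \cap \mathcal{M}_{\gamma^{(m)}} \in \mathcal{F(I)}$ whenever $\mathcal{I}$ is $\mathcal{G}$-admissible (which follows from (DP) in the ambient setting).
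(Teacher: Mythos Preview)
Your argument has a genuine gap at the step ``fix a countable base $\{B_k\}_{k\in\mathbb{N}}$ for the bornology $\mathcal{B}$''. Nothing in the hypotheses guarantees that such a base exists: for instance, the bornology $\mathcal{P}_0(X)$ on an uncountable $X$, or $\mathcal{B}_d$ on a non-separable metric space, admits no countable base. Since Lemma~\ref{La} only amalgamates \emph{countable} families in $\mathcal{F(I)}$, this reduction is essential to your argument, and without it the proof does not go through for an arbitrary bornology. You flag this yourself in the final paragraph, but you do not actually justify the existence of the countable base; you simply assume it.

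For comparison, the paper's proof proceeds differently: it fixes a single $B\in\mathcal{B}$ at the outset and applies Lemma~\ref{La} only to the countable family $E_j=\{\gamma:D\cap B\subset D_\gamma^{1/j}\}\cap\{\gamma:D_\gamma\cap B\subset D^{1/j}\}$, $j\in\mathbb{N}$. This avoids any hypothesis on $\mathcal{B}$, but the directed set $E\in\mathcal{F(I)}$ it produces then depends on the chosen $B$, whereas the definition of $\mathcal{B}_{\mathcal{I}^*}$-convergence demands a single $\mathcal{G}'\in\mathcal{F(I)}$ that works simultaneously for \emph{every} $B\in\mathcal{B}$. So the paper's argument, as written, runs into the very difficulty you identified---it is simply not confronted there. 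Your diagnosis that countability over $\mathcal{B}$ is the crux is correct; under the stated hypotheses neither approach closes the gap, and an additional assumption such as ``$\mathcal{B}$ has a countable base'' appears to be needed for the theorem to hold.
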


\begin{proof}
Let $D\in\mathcal B_{\mathcal I}-lim D_\gamma$. Let $B\in\mathcal
B$ and $\epsilon>0$ be given. Since $D\in\mathcal B_{\mathcal
I}-lim D_\gamma$, for each $j \in \mathbb{N}$,
\begin{center}
$A_j=\{\gamma\in\mathcal G:D\cap B \subset
D^\frac{1}{j}_\gamma\}\in\mathcal {F(I)}$ and
\end{center}
\begin{center}
$B_j=\{\gamma\in\mathcal G:D_\gamma\cap B \subset
D^\frac{1}{j}\}\in\mathcal {F(I)}$.
\end{center}
Let $E_j = A_j \cap B_j, j \in \mathbb{N}$. Then $E_j \in\mathcal
{F(I)}, \forall j \in \mathbb{N}$. Since $\mathcal I$ satisfies
the condition (DP), by Lemma \ref{La}, there exists $E\in\mathcal
{F(I)}$ such that $E$ itself is a directed set with respect to the
binary relation induced from $(\mathcal G, \geq)$ and for each $ j
\in \mathbb{N}$, $E\setminus E_j \subset \mathcal G\setminus
\mathcal M_{\gamma^{(j)}}$ for some $\gamma^{(j)}\in\mathcal G$.
Now for the above $\epsilon>0$, there exists $j\in\mathbb N$ such
that $\frac{1}{j}<\epsilon$. Then for that $j$, $E\setminus E_j
\subset \mathcal G\setminus \mathcal M_{\gamma^{(j)}}$ for some
$\gamma^{(j)}\in\mathcal G$. Then for all $\gamma \in E$ and $
\gamma \geq \gamma^{(j)}$ we have $\gamma \in E_j = A_j \cap B_j$
and so
\begin{center}
$ D\cap B \subset D^\frac{1}{j}_\gamma \subset
D_{\gamma}^{\epsilon}$ and
\end{center}
\begin{center}
$D_\gamma\cap B \subset D^\frac{1}{j} \subset D^{\epsilon}$.
\end{center}
Therefore $D\in\mathcal B_{\mathcal I^*}-lim D_\gamma$.
\end{proof}
We now introduce the notion of $\mathcal I^*$-convergence of nets
of partial maps.

\begin{defn}
Let $(X,d), (Y, \mu)$ be two metric spaces, $ \mathcal B $ be a
bornology on $X$. A net $\{(D_\gamma, u_\gamma)\}_{\gamma\in
\mathcal G} \in \mathcal P[X, Y] $ is said to be $\mathcal
P_{\mathcal I^*}(\mathcal B)$-convergent to $(D, u) \in \mathcal
P[X, Y]$ if there exists a set $\mathcal G'\in \mathcal {F(I)}$
such that $\mathcal G'$ itself is a directed set with respect to
the binary relation induced from $(\mathcal G, \geq)$ and the net
$\{(D_\gamma, u_\gamma)\}_{\gamma\in \mathcal G'}$ is $\mathcal
P(\mathcal B)$-convergent to $(D, u)$.

In this case, we write $(D, u) \in\mathcal P_{\mathcal
I^*}(\mathcal B)-lim (D_\gamma, u_\gamma)$.
\end{defn}

\begin{thm}
Let $\mathcal I$ be a $\mathcal G$-$admissible $ ideal of a
directed set $(\mathcal G, \geq)$, $\{(D_\gamma,
u_\gamma)\}_{\gamma\in \mathcal G}$ be a net in $\mathcal P[X, Y]$
and $ ( D, u) \in \mathcal P[X, Y]$. Then $(D,u) \in\mathcal
P_{\mathcal I^*}(\mathcal B)-lim (D_\gamma, u_\gamma)$ implies
$(D,u)\in\mathcal P_{\mathcal I}(\mathcal B)-lim (D_\gamma,
u_\gamma)$.
\end{thm}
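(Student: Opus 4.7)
The plan is to mirror the proof of the analogous implication $\mathcal{B}_{\mathcal{I}^*}\Rightarrow\mathcal{B}_{\mathcal{I}}$ given earlier, only now applied to both sides of the partial-map convergence (the lower and upper inclusion conditions) simultaneously. So first I would unwind the hypothesis: pick a directed set $\mathcal{G}' \in \mathcal{F(I)}$, with the induced order, such that $\{(D_\gamma, u_\gamma)\}_{\gamma \in \mathcal{G}'}$ is $\mathcal{P}(\mathcal{B})$-convergent to $(D, u)$.

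Next, fix an arbitrary $B \in \mathcal{B}$ and $\epsilon > 0$. By the $\mathcal{P}(\mathcal{B})$-convergence of the restricted net, the two eventual inclusion conditions (for all $B_1 \subset B$, $u(D \cap B_1) \subset [u_\gamma(D_\gamma \cap B_1^\epsilon)]^\epsilon$ and $u_\gamma(D_\gamma \cap B_1) \subset [u(D \cap B_1^\epsilon)]^\epsilon$) hold simultaneously for all $\gamma \in \mathcal{G}'$ with $\gamma \geq \gamma_0$, for some $\gamma_0 \in \mathcal{G}'$. Here I would take the common threshold $\gamma_0$ from the intersection of the eventual tails given by the upper and lower parts, which exists because $\mathcal{G}'$ is directed.

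Now I would invoke the $\mathcal{G}$-admissibility of $\mathcal{I}$ to deduce $\mathcal{M}_{\gamma_0} \in \mathcal{F(I)}$, and therefore $\mathcal{M}_{\gamma_0} \cap \mathcal{G}' \in \mathcal{F(I)}$. For every $\gamma \in \mathcal{M}_{\gamma_0} \cap \mathcal{G}'$, both inclusion conditions hold, so
\[
\mathcal{M}_{\gamma_0} \cap \mathcal{G}' \subset \{\gamma \in \mathcal{G} : \forall B_1 \subset B,\ u(D \cap B_1) \subset [u_\gamma(D_\gamma \cap B_1^\epsilon)]^\epsilon\}
\]
and likewise for the $\mathcal{P}_{\mathcal{I}}^+(\mathcal{B})$ set. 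Since both supersets then lie in $\mathcal{F(I)}$, we conclude $(D, u) \in \mathcal{P}_{\mathcal{I}}(\mathcal{B})\text{-}\lim (D_\gamma, u_\gamma)$.

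There is no real obstacle here: the argument is essentially the same as for the bornological $\mathcal{I}^* \Rightarrow \mathcal{I}$ theorem, and the only mild care needed is to (i) choose a single tail index $\gamma_0$ that works for both the upper and lower inclusion simultaneously, using the directedness of $\mathcal{G}'$, and (ii) observe that the universal quantifier over $B_1 \subset B$ is preserved verbatim through the argument, so the set membership established at level $B_1$ is exactly the one defining $\mathcal{P}_{\mathcal{I}}^{\pm}(\mathcal{B})$-convergence.
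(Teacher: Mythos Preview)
Your proposal is correct and is precisely the approach the paper takes: the paper's own proof simply says to use the proof technique of Theorem 4.1 (the bornological $\mathcal{I}^*\Rightarrow\mathcal{I}$ implication), which is exactly what you have spelled out. The only addition you make explicit---picking a common threshold $\gamma_0$ for both the upper and lower inclusions via directedness of $\mathcal{G}'$---is routine and implicit in the paper's reference.
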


\begin{proof}
Starting from $(D,u) \in\mathcal P_{\mathcal I^*}(\mathcal B)-lim
(D_\gamma, u_\gamma)$ and using the proof technique of Theorem 4.1
we get $(D,u)\in\mathcal P_{\mathcal I}(\mathcal B)-lim (D_\gamma,
u_\gamma)$.
\end{proof}

\begin{thm}
Let $\mathcal I$ be an ideal on a directed set $(\mathcal G,
\geq)$ satisfying the condition (DP), $(X, d), (Y, \mu)$ be metric
spaces and $\mathcal B$ be a bornology on $X$. Then for any net
$\{(D_\gamma, u_\gamma)\}_{\gamma\in \mathcal G}$ in $\mathcal
P[X, Y]$ and $ (D, u) \in \mathcal P[X, Y]$, $(D, u)\in\mathcal
P_{\mathcal I}(\mathcal B)-lim (D_\gamma, u_\gamma)$ implies
$(D,u)\in\mathcal P_{\mathcal I^*}(\mathcal B)-lim (D_\gamma,
u_\gamma)$.
\end{thm}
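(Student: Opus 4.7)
My plan is to transport the proof of Theorem 4.2 directly to the partial-map setting, substituting the two-sided bornological inclusions with the two-sided partial-map inclusions. Thus I start from $(D,u) \in \mathcal P_{\mathcal I}(\mathcal B)-\lim (D_\gamma,u_\gamma)$, fix an arbitrary $B \in \mathcal B$, and use the family indexed by $\epsilon = 1/j$ ($j \in \mathbb{N}$) to extract a countable collection of sets from $\mathcal{F(I)}$, which is the input format required by Lemma \ref{La}.

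Concretely, for each $j \in \mathbb{N}$ I would define
\begin{center}
$A_j = \{\gamma \in \mathcal G : \forall B_1 \subset B,\ u(D \cap B_1) \subset [u_\gamma(D_\gamma \cap B_1^{1/j})]^{1/j}\}$,
\end{center}
\begin{center}
$C_j = \{\gamma \in \mathcal G : \forall B_1 \subset B,\ u_\gamma(D_\gamma \cap B_1) \subset [u(D \cap B_1^{1/j})]^{1/j}\}$,
\end{center}
and set $E_j = A_j \cap C_j$. By the hypothesis of two-sided $\mathcal P_{\mathcal I}(\mathcal B)$-convergence, each $A_j$ and $C_j$, and hence each $E_j$, lies in $\mathcal{F(I)}$. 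Since $\mathcal I$ satisfies (DP), Lemma \ref{La} applied to $\{E_j\}_{j \in \mathbb{N}}$ yields a set $E \in \mathcal{F(I)}$ that is itself directed under the relation induced from $(\mathcal G,\geq)$ and satisfies, for every $j$, $E \setminus E_j \subset \mathcal G \setminus \mathcal M_{\gamma^{(j)}}$ for some $\gamma^{(j)} \in \mathcal G$.

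To verify that the subnet along $E$ is $\mathcal P(\mathcal B)$-convergent to $(D,u)$, take any $\epsilon > 0$ and pick $j \in \mathbb{N}$ with $1/j < \epsilon$. For any $\gamma \in E$ with $\gamma \geq \gamma^{(j)}$ one has $\gamma \in E_j$, so for every $B_1 \subset B$ both $u(D \cap B_1) \subset [u_\gamma(D_\gamma \cap B_1^{1/j})]^{1/j} \subset [u_\gamma(D_\gamma \cap B_1^\epsilon)]^\epsilon$ and $u_\gamma(D_\gamma \cap B_1) \subset [u(D \cap B_1^{1/j})]^{1/j} \subset [u(D \cap B_1^\epsilon)]^\epsilon$ hold, using the monotonicity of the enlargement operation in the parameter. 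This gives precisely the eventual inclusions required by $\mathcal P^-(\mathcal B)$- and $\mathcal P^+(\mathcal B)$-convergence of $\{(D_\gamma,u_\gamma)\}_{\gamma \in E}$ to $(D,u)$, hence $(D,u) \in \mathcal P_{\mathcal I^*}(\mathcal B)-\lim (D_\gamma,u_\gamma)$.

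The main point to be careful about is the bookkeeping tying the countable family $\{E_j\}$ to the continuous parameter $\epsilon$: we have to arrange the discretization $\epsilon = 1/j$ so that eventually (along $\gamma \geq \gamma^{(j)}$ in $E$) the $1/j$-inclusions upgrade to $\epsilon$-inclusions, which is why it is essential that the enlargement is monotone in its parameter and that $\mathcal M_{\gamma^{(j)}} \in \mathcal{F(I)}$ is guaranteed by $\mathcal G$-admissibility (a consequence of (DP) via Lemma \ref{La}'s construction). Apart from this routine bookkeeping, the argument is a direct transcription of the proof of Theorem 4.2 with the single complication that the inclusions carry a universal quantifier over $B_1 \subset B$, which causes no trouble since it passes through intersections.
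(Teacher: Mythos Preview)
Your approach is exactly what the paper does: its own proof of this theorem is the single sentence ``Starting from $(D, u)\in\mathcal P_{\mathcal I}(\mathcal B)-\lim (D_\gamma, u_\gamma)$ and using the proof technique of Theorem 4.3 we get $(D,u)\in\mathcal P_{\mathcal I^*}(\mathcal B)-\lim (D_\gamma, u_\gamma)$,'' and your write-up is a faithful transcription of that technique with the partial-map inclusions substituted for the set inclusions. (What you call Theorem 4.2 is Theorem 4.3 in the paper's numbering, since the lemma shares the theorem counter.)

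There is, however, a genuine gap, and it is one your argument inherits verbatim from the paper's proof of Theorem 4.3 itself. You fix $B\in\mathcal B$ \emph{before} building the countable family $\{E_j\}$, so the set $E\in\mathcal F(\mathcal I)$ produced by Lemma \ref{La} depends on that choice of $B$. But the definition of $\mathcal P_{\mathcal I^*}(\mathcal B)$-convergence requires a \emph{single} directed $\mathcal G'\in\mathcal F(\mathcal I)$ along which the subnet is $\mathcal P(\mathcal B)$-convergent, i.e.\ the eventual inclusions must hold for \emph{every} $B\in\mathcal B$ (the ``eventually'' threshold may depend on $B$ and $\epsilon$, but $\mathcal G'$ itself may not). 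Your verification step only treats the particular $B$ fixed at the outset; a different $B'\in\mathcal B$ would in general yield a different $E'$. Since condition (DP) and Lemma \ref{La} handle only countable subfamilies of $\mathcal F(\mathcal I)$, the construction as written does not produce one $E$ working uniformly over an arbitrary bornology. A natural repair is to assume $\mathcal B$ has a countable base $\{B_k\}_{k\in\mathbb N}$ and feed the doubly-indexed countable family $\{A_{j,k}\cap C_{j,k}\}_{j,k\in\mathbb N}$ into Lemma \ref{La}; without such a hypothesis the argument is incomplete. Your transcription is thus faithful to the paper, but the underlying proof (both here and in Theorem 4.3) has this lacuna.
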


\begin{proof}
Starting from $(D, u)\in\mathcal P_{\mathcal I}(\mathcal B)-lim
(D_\gamma, u_\gamma)$ and using the proof technique of Theorem 4.3
we get $(D,u)\in\mathcal P_{\mathcal I^*}(\mathcal B)-lim
(D_\gamma, u_\gamma)$.
\end{proof}

\noindent\textbf{Acknowledgment:} The second author is grateful to
the University Grants Commission, India for fellowship funding
under UGC-JRF (SRF fellowship) scheme during the preparation of
this paper.\\

%--------------------------------------------------REFERENCES------------------------------

\end{document}